\numberwithin{equation}{section}
\newtheorem{theorem}{Theorem}[section]
\newtheorem{proposition}[theorem]{Proposition}
\theoremstyle{definition}
\begin{document}

\baselineskip=15pt

\title[Connections on curves defined over a number field]{Connections and Higgs bundles
on curves defined over a number field}

\author[I. Biswas]{Indranil Biswas}

\address{Department of Mathematics, Shiv Nadar University, NH91, Tehsil Dadri,
Greater Noida, Uttar Pradesh 201314, India}

\email{indranil.biswas@snu.edu.in, indranil29@gmail.com}

\author[S. Gurjar]{Sudarshan Gurjar}

\address{Department of Mathematics, Indian Institute of Technology Bombay,
Powai, Mumbai 400076, Maharashtra, India}

\email{sgurjar@math.iitb.ac.in}

\subjclass[2010]{14H25, 14H60}

\keywords{Connection, Higgs bundle, Belyi's theorem, parabolic bundle}

\date{}

\begin{abstract}
Let $X_0$ be an irreducible smooth projective curve defined over $\overline{\mathbb Q}$
and $\mathbb E$ a vector bundle on $X_0$. We give a criterion for connections on the
base change ${\mathbb E}\otimes_{\overline{\mathbb Q}}{\mathbb C}\, \longrightarrow\,
X_0\times_{{\rm Spec}\,\overline{\mathbb Q}} {\rm Spec}\,\mathbb{C}
$ to $\mathbb C$ to be the base change of some
connection on $\mathbb E$. A similar criterion is given for Higgs fields on
${\mathbb E}\otimes_{\overline{\mathbb Q}}{\mathbb C}$.
\end{abstract}

\maketitle

\section{Introduction}

A well-known theorem of Gennadii V. Belyi says that an irreducible smooth complex projective curve $X$ is 
isomorphic to one defined over $\overline{\mathbb Q}$ if and only if $X$ admits a nonconstant morphism to
$\mathbb{P}^1_{\mathbb C}$ whose branch locus is contained in the subset $\{0,\,1,\, \infty\} \, \subset\,
\mathbb{P}^1_{\mathbb C}$ \cite{Be}. See \cite{Gr2}, \cite{SL} for a program inspired by the work of Belyi.

Let $X_0$ be an irreducible smooth projective curve defined over $\overline{\mathbb Q}$, and let
$X\,=\, X_0\times_{{\rm Spec}\,\overline{\mathbb Q}} {\rm Spec}\,\mathbb{C}$ be its base change to
$\mathbb C$. Given a vector bundle $E$ on $X$, we may ask when is it the base change to $\mathbb C$ of
a vector bundle on $X_0$? To answer this, let
$$
f_0\ :\ X_0\ \longrightarrow\ {\mathbb P}^1_{\overline{\mathbb Q}}
$$
be a map which is unramified over the complement ${\mathbb P}^1_{\mathbb C}\setminus
\{0,\,1,\, \infty\}$, and let
$$
f\ :\ X\ \longrightarrow\ {\mathbb P}^1_{\mathbb C}
$$
be its base change to $\mathbb C$. Consider the vector bundle $f_*E$ on ${\mathbb P}^1_{\mathbb C}$. It has
natural parabolic structure over $\{0,\,1,\, \infty\}$ \cite{AB}. The
vector bundle $E$ is the base change to $\mathbb C$ of a vector bundle on $X_0$ if and only if 
the parabolic vector bundle $f_*E$ is the base change to $\mathbb C$ of a parabolic vector
bundle on ${\mathbb P}^1_{\overline{\mathbb Q}}$ \cite{BG}.

Here we address the following question. Let $\mathbb E$ be a vector bundle on $X_0$, and let
$E\, :=\, {\mathbb E}\otimes_{\overline{\mathbb Q}}{\mathbb C}$ be the corresponding vector bundle on $X$.
What is the criterion for a connection on $E$ to be given by a connection on $\mathbb E$?
In similar vein, what is the criterion for a Higgs field on $E$ to be given by a
Higgs field on $\mathbb E$?

A connection $D$ on $E$ induces a logarithmic connection on the direct image $f_*E$. The singular
locus of the connection is the branch locus of $f$. This
induced logarithmic connection is denoted by $f_*D$. We prove the following (see Theorem \ref{thm1}):

\begin{theorem}\label{thm0}
Let $\mathbb E$ be a vector bundle on $X_0$, and let $D$ be a connection on the base change
$E\, \longrightarrow\, X$ of $\mathbb E$ to $\mathbb C$. Then there is a connection on $\mathbb{E}$ whose base
change, to $\mathbb C$, coincides with $D$ if and only if there is a logarithmic connection
$$
D_0\ :\ (f_0)_*\mathbb{E} \ \longrightarrow\ (f_0)_*\mathbb{E}\otimes K_{\mathbb{P}^1_{\overline{\mathbb Q}}}
\otimes {\mathcal O}_{\mathbb{P}^1_{\overline{\mathbb Q}}}(0+1+\infty)
$$
whose base change, to $\mathbb C$, is the connection $f_*D$.
\end{theorem}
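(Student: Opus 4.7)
The strategy is to replicate the argument of \cite{BG} at the level of connections, using the correspondence of \cite{AB} between connections on $\mathbb{E}$ and logarithmic connections on $(f_0)_*\mathbb{E}$ respecting the natural parabolic structure at $\{0,1,\infty\}$, and then verifying that both this correspondence and the compatibility condition descend along $\overline{\mathbb{Q}}\hookrightarrow\mathbb{C}$.

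The forward direction is routine. Given a connection $\mathbb{D}$ on $\mathbb{E}$ with $D=\mathbb{D}\otimes_{\overline{\mathbb Q}}\mathbb{C}$, I would set $D_0 := (f_0)_*\mathbb{D}$. Since the branch locus of $f_0$ lies in $\{0,1,\infty\}$, this is a logarithmic connection on $(f_0)_*\mathbb{E}$ with poles along $0+1+\infty$. Flat base change identifies $(f_0)_*\mathbb{E}\otimes_{\overline{\mathbb Q}}\mathbb{C}$ naturally with $f_*E$, and the same identification sends $D_0\otimes\mathbb{C}$ to $f_*D$.

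For the reverse direction, suppose $D_0$ is given with $D_0\otimes\mathbb{C}=f_*D$. Two ingredients are required. First, the pushforward map on connections is injective: two distinct connections on $\mathbb{E}$ differ by a nonzero section of the sheaf ${\rm End}(\mathbb{E})\otimes\Omega^1_{X_0}$, and the finite pushforward functor $(f_0)_*$ is faithful on coherent sheaves. Second, by a variant of the main correspondence of \cite{AB}, the image of the pushforward map consists precisely of those logarithmic connections on $(f_0)_*\mathbb{E}$ that respect the canonical parabolic structure at $\{0,1,\infty\}$. Granting these, the argument runs as follows: $f_*D$ automatically respects the parabolic structure on $f_*E$ (by the $\mathbb{C}$-form of this correspondence); the parabolic structure on $f_*E$ is the base change of the one on $(f_0)_*\mathbb{E}$ \cite{BG}; and parabolic compatibility is a closed condition on residues and subquotients, hence descends under base change. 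Therefore $D_0$ respects the parabolic structure on $(f_0)_*\mathbb{E}$, so $D_0=(f_0)_*\mathbb{D}$ for some connection $\mathbb{D}$ on $\mathbb{E}$. Finally, $(f_0)_*(\mathbb{D}\otimes\mathbb{C})=D_0\otimes\mathbb{C}=f_*D$, and injectivity of $f_*$ over $\mathbb{C}$ forces $\mathbb{D}\otimes\mathbb{C}=D$.

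The principal obstacle is establishing the second ingredient, namely the characterization of the image of the pushforward map, over $\overline{\mathbb{Q}}$ rather than merely over $\mathbb{C}$. The question localizes to the ramification points of $f_0$: near a point of ramification index $e$, with local model $y=x^e$, the residue of $(f_0)_*\mathbb{D}$ in the frame $\{1,x,\ldots,x^{e-1}\}$ takes the form $\mathrm{diag}(0,1/e,\ldots,(e-1)/e)$ up to a nilpotent shift, matching the parabolic weights. This is a purely algebraic local statement valid over any field of characteristic zero, and it is this base-field independence that enables the descent argument to go through.
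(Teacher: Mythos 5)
Your forward direction agrees with the paper's (Proposition \ref{prop1}(2)): the direct image of a connection is a logarithmic connection with poles on $0+1+\infty$, and flat base change identifies $(f_0)_*\mathbb{D}\otimes_{\overline{\mathbb Q}}\mathbb{C}$ with $f_*D$. The reverse direction, however, rests entirely on your second ingredient --- that the image of the pushforward map is \emph{exactly} the set of logarithmic connections on $(f_0)_*\mathbb{E}$ compatible with the canonical parabolic structure, \emph{over} $\overline{\mathbb{Q}}$ --- and this is precisely what you do not prove. The justification you offer (the residue of $(f_0)_*\mathbb{D}$ in the frame $1,x,\dots,x^{e-1}$ for the local model $y=x^{e}$ is $\mathrm{diag}(0,1/e,\dots,(e-1)/e)$ up to a nilpotent shift) establishes only the easy inclusion: pushforwards of connections are parabolic-compatible. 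What your argument actually consumes is the surjectivity: given a parabolic-compatible $D_0$ over $\overline{\mathbb{Q}}$, produce a connection $\mathbb{D}$ on $\mathbb{E}$ with $(f_0)_*\mathbb{D}=D_0$. That is the substantive content of the theorem, and ``localizes to the ramification points'' is not a proof of it; one must exhibit $\mathbb{E}$ (or rather its pullback) as a summand of the pullback of $(f_0)_*\mathbb{E}$ and check that the pulled-back logarithmic connection is regular and preserves that summand. Note also that you cannot obtain the $\overline{\mathbb{Q}}$-surjectivity from the $\mathbb{C}$-form of the \cite{AB} correspondence by pure descent: applying it to $D_0\otimes\mathbb{C}=f_*D$ merely returns the connection $D$ you started with, and whether $D$ descends to $\overline{\mathbb{Q}}$ is exactly the question being asked.

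The paper avoids this issue by never invoking the inverse correspondence. It passes to the Galois closure $\varphi_0\colon Y_0\to\mathbb{P}^1_{\overline{\mathbb Q}}$ of $f_0$, where $\varphi_0^*(f_0)_*\mathbb{E}$ decomposes explicitly as $\bigoplus_{g\in\Delta}g^*\gamma_0^*\mathbb{E}$ with $e\in\Delta$; the two properties needed of $\varphi_0^*D_0$ (that it is regular rather than merely logarithmic, and that it preserves the summand $\gamma_0^*\mathbb{E}$) are linear conditions on a $\overline{\mathbb{Q}}$-object that can be verified after the faithfully flat base change to $\mathbb{C}$, where they hold because the base change is $\bigoplus_{g\in\Delta}g^*\gamma^*D$. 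The resulting connection on $\gamma_0^*\mathbb{E}$ is then descended from $Y_0$ to $X_0$ via the splitting $(\gamma_0')_*\mathcal{O}=\mathcal{O}\oplus\mathcal{I}$ over the unramified locus (see the proof of Proposition \ref{prop2}). If you wish to keep your architecture, you must prove your ingredient over $\overline{\mathbb{Q}}$, which in practice amounts to reproducing this Galois-closure argument or reworking \cite{AB} over an arbitrary algebraically closed field of characteristic zero.
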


A similar criterion is proved for Higgs fields on $E$; see Theorem \ref{thm2}.

\section{Pullback and direct image of connections and Higgs fields}

\subsection{Connections and Higgs bundles}

Let $\mathbb{X}$ be an irreducible smooth projective curve defined over an
algebraically closed field $k$ of characteristic zero.
The canonical line bundle of $\mathbb{X}$ will be denoted by $K_{\mathbb{X}}$. In fact,
the canonical line bundle of any smooth curve $M$ will be denoted by $K_M$. Let
$\mathbb{E}$ be an algebraic vector bundle over $\mathbb{X}$.

An \textit{algebraic connection} on $\mathbb{E}$ is an algebraic differential operator
$$
D\ :\ \mathbb{E}\ \longrightarrow\ \mathbb{E}\otimes K_{\mathbb{X}}
$$
of order one satisfying the Leibniz identity which says that
$D(hs)\,=\, h\cdot D(s)+ s\otimes dh$ for any locally defined algebraic section
$s$ of $\mathbb{E}$ and any locally defined algebraic function $h$ on $\mathbb{X}$.

Since all connections considered here will be algebraic, often they will be referred
to as connections dropping the term algebraic.

A \textit{Higgs field} on $\mathbb{E}$ is an algebraic section of the vector
bundle $\text{End}(\mathbb{E})\otimes K_{\mathbb{X}}$. A \textit{Higgs bundle}
on $\mathbb{X}$ is a vector bundle on $\mathbb{X}$ equipped with a Higgs field.
See \cite{Hi}, \cite{Si} for Higgs bundles.

Any vector bundle $\mathbb{E}$ admits a Higgs field, but not every vector bundle
admits a connection.

A vector bundle $V$ on $\mathbb{X}$ is called \textit{decomposable} if $V\,=\, V_1\oplus V_2$,
where both $V_1$ and $V_2$ are vector bundles on $\mathbb{X}$ of positive rank. A vector bundle on
$\mathbb{X}$ is called \textit{indecomposable} if it is not decomposable.
Any vector $V$ on $\mathbb{X}$ can be expressed as a direct sum of indecomposable vector
bundles. If
\begin{equation}\label{a1}
\bigoplus_{i=1}^a V_i\ =\ V \ =\ \bigoplus_{j=1}^b W_j
\end{equation}
are two decompositions of $V$ into direct sums of indecomposable vector bundles, then a theorem
of Atiyah says that $a\,=\, b$ and there is permutation $\sigma$ of $\{1,\, \cdots,\, a\}$ such
that $V_i$ is isomorphic to $W_{\sigma(i)}$ for all $i \, \in\, \{1,\, \cdots,\, a\}$
(see \cite[p.~315, Theorem 3]{At1}). A theorem of Atiyah and Weil says that $V$ in \eqref{a1}
admits a connection if $\text{degree}(V_i)\,=\, 0$ for all $i \, \in\, \{1,\, \cdots,\, a\}$
\cite[p.~203, Theorem 10]{At2}, \cite{We}.

\subsection{Logarithmic connections and Higgs bundles}

Fix a reduced effective divisor
\begin{equation}\label{a2}
S\,=\, \sum_{i=1}^d x_i
\end{equation}
on $\mathbb{X}$; so $\{x_1,\, \cdots, \, x_d\}$ are distinct $d$ points of $\mathbb{X}$. As before,
$\mathbb{E}$ is an algebraic vector bundle over $\mathbb{X}$.

A \textit{logarithmic connection} on $\mathbb{E}$ singular over $S$ is an algebraic differential operator
$$
D\ :\ \mathbb{E}\ \longrightarrow\ \mathbb{E}\otimes K_{\mathbb{X}}\otimes {\mathcal O}_{\mathbb{X}}(S)
$$
of order one such that $D(hs)\,=\, h\cdot D(s)+ s\otimes dh$ for any locally defined algebraic section
$s$ of $\mathbb{E}$ and any locally defined algebraic function $h$ on $\mathbb{X}$.

A \textit{logarithmic Higgs field} on $\mathbb{E}$ singular on $S$ is an algebraic section of the vector
bundle $\text{End}(\mathbb{E})\otimes K_{\mathbb{X}}\otimes {\mathcal O}_{\mathbb{X}}(S)$.

\subsection{The pullback operation}

Let $\mathbb{Y}$ be an irreducible smooth projective curve over $k$ and
$$
\phi\ :\ \mathbb{Y}\ \longrightarrow\ \mathbb{X}
$$
a nonconstant morphism such that the restriction
\begin{equation}\label{b1}
\phi\big\vert_{\mathbb{Y}\setminus \phi^{-1}(S)}\ :\ \mathbb{Y}\setminus \phi^{-1}(S)
\ \longrightarrow\ \mathbb{X} \setminus S
\end{equation}
is unramified. Let
$$
\{y_1,\, \cdots,\, y_\delta\} \ :=\ \phi^{-1}(\{x_1,\, \cdots, \, x_d\}) \ \subset\ \mathbb{Y}
$$
be the set-theoretic inverse image (see \eqref{a2}); so $\{y_1,\, \cdots,\, y_\delta\}$ are distinct
$\delta$ points of $\mathbb{Y}$. Let
$$
{\mathbb S}\,=\, \sum_{i=1}^\delta y_i
$$
be the reduced effective divisor on $\mathbb{Y}$. Note that
\begin{equation}\label{a3}
\phi^* (K_{\mathbb{X}}\otimes {\mathcal O}_{\mathbb{X}}(S)) \ = \
K_{\mathbb{Y}}\otimes {\mathcal O}_{\mathbb{Y}}({\mathbb S}),
\end{equation}
because $\phi\big\vert_{\mathbb{Y}\setminus \phi^{-1}(S)}$ in \eqref{b1} is unramified.

Take a logarithmic connection
$$
D\ :\ \mathbb{E}\ \longrightarrow\ \mathbb{E}\otimes K_{\mathbb{X}}\otimes {\mathcal O}_{\mathbb{X}}(S).
$$
Using \eqref{a3} we have
$$
\phi^{-1}(D)\ :\ \phi^{-1}(\mathbb{E})\ \longrightarrow\ \phi^*(\mathbb{E}\otimes K_{\mathbb{X}}\otimes {\mathcal O}_{\mathbb{X}}(S))
\ =\ (\phi^*\mathbb{E})\otimes K_{\mathbb{Y}}\otimes {\mathcal O}_{\mathbb{Y}}({\mathbb S}).
$$
Consider the natural inclusion map $\phi^{-1}(\mathbb{E})\, \hookrightarrow\, \phi^*\mathbb{E}\,=\,
(\phi^{-1}(\mathbb{E}))\otimes_{\phi^{-1}({\mathcal O}_{\mathbb{X}})}
{\mathcal O}_{\mathbb{Y}}$. Using the Leibniz identity,
the above homomorphism $\phi^{-1}(D)$ extends uniquely to a $k$--linear homomorphism of sheaves
$$
\phi^* D\, :\, \phi^* \mathbb{E}\, =\, (\phi^{-1}(\mathbb{E}))\otimes_{\phi^{-1}({\mathcal O}_{\mathbb{X}})}
{\mathcal O}_{\mathbb{Y}}\, \longrightarrow\, \phi^*(\mathbb{E}\otimes K_{\mathbb{X}}\otimes {\mathcal O}_{\mathbb{X}}(S))
\, =\, (\phi^*\mathbb{E})\otimes K_{\mathbb{Y}}\otimes {\mathcal O}_{\mathbb{Y}}({\mathbb S}).
$$
So $\phi^* D$ is a logarithmic connection on $\phi^* \mathbb{E}$ singular on $\mathbb S$.

Take a logarithmic Higgs field $\theta\, \in\, H^0(\mathbb{X},\, \text{End}(\mathbb{E})\otimes K_{\mathbb{X}}\otimes
{\mathcal O}_{\mathbb{X}}(S))$ on $\mathbb{E}$. Using \eqref{a3},
$$
\phi^*\theta\, \in\, H^0(\mathbb{Y},\, \text{End}(\phi^*\mathbb{E})\otimes \phi^*(K_{\mathbb{X}}\otimes
{\mathcal O}_{\mathbb{X}}(S)))\,=\, H^0(\mathbb{Y},\, \text{End}(\phi^*\mathbb{E})\otimes
K_{\mathbb{Y}}\otimes {\mathcal O}_{\mathbb{Y}}({\mathbb S})).
$$
So $\phi^*\theta$ is logarithmic Higgs field on $\phi^*\mathbb{E}$ singular on $\mathbb S$.

\subsection{The direct image}

Let $\mathbf Z$ be an irreducible smooth projective curve over $k$ and
$$
\psi\ :\ \mathbb{X}\ \longrightarrow\ \mathbf{Z}
$$
a nonconstant morphism such that the restriction
\begin{equation}\label{b2}
\psi\big\vert_{{\mathbb X}\setminus S}\ :\ \mathbb{X}\setminus S\ \longrightarrow\ \mathbf{Z}
\end{equation}
is unramified. Consider the subset $\psi(\{x_1,\, \cdots, \, x_d\})\,=\, \{z_1,\, \cdots,\, z_n\}\, \subset
\,\mathbf{Z}$ (see \eqref{a2}), so $\{z_1,\, \cdots,\, z_n\}$ are distinct $n$ points of $\mathbf{Z}$. Let
$$
{\mathcal S}\ =\ \sum_{i=1}^n z_i
$$
be the reduced effective divisor on $\mathbf{Z}$. As in \eqref{a3}, we have
\begin{equation}\label{a4}
\psi^* (K_{\mathbf{Z}}\otimes {\mathcal O}_{\mathbf{Z}}({\mathcal S})) \ = \
K_{\mathbb{X}}\otimes {\mathcal O}_{\mathbb{X}}(\psi^{-1}({\mathcal S}))\ \supset\
K_{\mathbb{X}}\otimes {\mathcal O}_{\mathbb{X}}(S),
\end{equation}
where $\psi^{-1}({\mathcal S})$ is the scheme-theoretic inverse image of $\mathcal S$; this
is because the map in \eqref{b2} is unramified.

Take a logarithmic connection
$$
D\ :\ \mathbb{E}\ \longrightarrow\ \mathbb{E}\otimes K_{\mathbb{X}}\otimes {\mathcal O}_{\mathbb{X}}(S).
$$
{}From \eqref{a4} we see that $D$ gives a homomorphism
\begin{equation}\label{a5}
D\ :\ \mathbb{E}\ \longrightarrow\ \mathbb{E}\otimes K_{\mathbb{X}}\otimes 
{\mathcal O}_{\mathbb{X}}(\psi^{-1}({\mathcal S}));
\end{equation}
this homomorphism is also denoted by $D$ to have notational simplicity. By the projection formula and \eqref{a4},
\begin{equation}\label{a5b}
\psi_*(\mathbb{E}\otimes K_{\mathbb{X}}\otimes{\mathcal O}_{\mathbb{X}}(\psi^{-1}({\mathcal S})))\ =\
\psi_*(\mathbb{E}\otimes\psi^* (K_{\mathbf{Z}}\otimes{\mathcal O}_{\mathbf{Z}}({\mathcal S})))
\ = \ (\psi_*\mathbb{E})\otimes K_{\mathbf{Z}}\otimes {\mathcal O}_{\mathbf{Z}}({\mathcal S}).
\end{equation}
Consequently, $D$ in \eqref{a5} produces a homomorphism
\begin{equation}\label{a6}
\psi_* D\ :\ \psi_* \mathbb{E}\ \longrightarrow\ (\psi_*\mathbb{E})\otimes K_{\mathbf{Z}}\otimes
{\mathcal O}_{\mathbf{Z}}({\mathcal S}).
\end{equation}
It is straightforward to check that $\psi_* D$ in \eqref{a6} satisfies the Leibniz identity.
Hence $\psi_* D$ is a logarithmic connection on $\psi_* \mathbb{E}$ singular on ${\mathcal S}$.

Take a logarithmic Higgs field $\theta\, :\, \mathbb{E}\ \longrightarrow\ \mathbb{E}\otimes K_{\mathbb{X}}
\otimes {\mathcal O}_{\mathbb{X}}(S)$ on $\mathbb{E}$. From \eqref{a4}, \eqref{a5b} and the projection
formula we have
\begin{equation}\label{a6b}
\psi_*\theta \ :\ \psi_* \mathbb{E}\ \longrightarrow\ 
\psi_*(\mathbb{E}\otimes\psi^* (K_{\mathbf{Z}}\otimes{\mathcal O}_{\mathbf{Z}}({\mathcal S})))
\ = \ (\psi_*\mathbb{E})\otimes K_{\mathbf{Z}}\otimes {\mathcal O}_{\mathbf{Z}}({\mathcal S}).
\end{equation}
So $\psi_*\theta$ is a logarithmic Higgs field on $\psi_* \mathbb{E}$ singular on $\mathcal S$.

\section{Curves defined over a number field}

Let $X_0$ be an irreducible smooth projective curve defined over the algebraic
closure $\overline{\mathbb Q}$ of the field of rational numbers. Let
\begin{equation}\label{e1}
f_0\, :\, X_0\, \, \longrightarrow\, \mathbb{P}^1_{\overline{\mathbb Q}}
\end{equation}
be a nonconstant morphism which is unramified over the complement
$\mathbb{P}^1_{\overline{\mathbb Q}}\setminus \{0,\,1,\, \infty\}$; it was
proved by Belyi that such a morphism $f_0$ exists \cite{Be}. The condition on
$f_0$ means that the branch locus of $f_0$ is contained in the subset
$\{0,\,1,\, \infty\}\, \subset\, \mathbb{P}^1_{\overline{\mathbb Q}}$.

Take a vector bundle $\mathbb{E}$ on $X_0$, and consider the direct image $(f_0)_*
\mathbb{E}\,\longrightarrow\, \mathbb{P}^1_{\overline{\mathbb Q}}$. The vector bundle
$(f_0)_*\mathbb{E}$ has a parabolic
structure over the three points $\{0,\,1,\, \infty\}$ \cite[Section~4]{AB}, and this
parabolic structure is defined over $\overline{\mathbb Q}$ \cite[Proposition 2.1]{BG}.

Let
$$
X\,=\, (X_0)_{\mathbb C}\, =\, X_0\times_{{\rm Spec}\,\overline{\mathbb Q}} {\rm Spec}\,\mathbb{C}
$$
be the base change of $X_0$ to $\mathbb C$. Let
\begin{equation}\label{e2}
f\,\,:=\,\, (f_0)_{\mathbb C}\,\, :\,\,
X\,\, \longrightarrow\, \,\mathbb{P}^1_{\overline{\mathbb Q}}\times_{{\rm Spec}\,\overline{\mathbb Q}}
{\rm Spec}\,{\mathbb C}\,\,=\,\, \mathbb{P}^1_{\mathbb C}
\end{equation}
be the base change, to $\mathbb C$, of the map $f_0$ in \eqref{e1}. Let
\begin{equation}\label{e3}
E\,:=\, \mathbb{E}\otimes_{\overline{\mathbb Q}} \mathbb{C}\, \longrightarrow\, X\,=\,
X_0\times_{{\rm Spec}\,\overline{\mathbb Q}} {\rm Spec}\,\mathbb{C}
\end{equation}
be the base change of the vector bundle $\mathbb{E}\, \longrightarrow\, X_0$ to $\mathbb C$.

The direct image $f_* E\, \longrightarrow\, \mathbb{P}^1_{\mathbb C}$ (see \eqref{e2} and \eqref{e3}) is clearly
the base change to $\mathbb C$ of $(f_0)_*\mathbb{E}\, \longrightarrow\, \mathbb{P}^1_{\overline{\mathbb Q}}$.

Denote the degree three divisor $0+1+\infty$ on $\mathbb{P}^1_{\mathbb C}$ by $S_1$. The
divisor $0+1+\infty$ on $\mathbb{P}^1_{\overline{\mathbb Q}}$ will be denoted by $S_0$.
The following proposition is evident.

\begin{proposition}\label{prop1}
\mbox{}
\begin{enumerate}
\item The direct image $f_* E\, \longrightarrow\, \mathbb{P}^1_{\mathbb C}$ has a parabolic structure
defined over $\overline{\mathbb Q}$ (recall that $f_* E$ is the base change to $\mathbb C$ of $(f_0)_*\mathbb{E}$).

\item Let $D\, :\, \mathbb{E}\, \longrightarrow\, \mathbb{E}\otimes K_{X_0}$ be a connection
on $\mathbb{E}$. Let
$$
{\mathcal D}\ := \ D\otimes_{\overline{\mathbb Q}} \mathbb{C}\ :\ E\, \longrightarrow\, E\otimes K_X
$$
be the base change of $D$ to $\mathbb C$. Then the logarithmic connection
$$
f_* {\mathcal D}\ :\ f_* E\ \longrightarrow\ (f_* E)\otimes K_X
\otimes {\mathcal O}_{\mathbb{P}^1_{\mathbb C}}(S_1)
$$
(see \eqref{a6}) is the base change to $\mathbb C$ of the logarithmic connection
$$
(f_0)_* D\ :\ (f_0)_* \mathbb{E}\ \longrightarrow\ ((f_0)_*\mathbb{E})\otimes K_{\mathbb{P}^1_{\overline{\mathbb Q}}}
\otimes {\mathcal O}_{\mathbb{P}^1_{\overline{\mathbb Q}}}(S_0),
$$
where $S_0$ (respectively, $S_1$) is the divisor $0+1+\infty$ on $\mathbb{P}^1_{\overline{\mathbb Q}}$
(respectively, $\mathbb{P}^1_{\mathbb C}$).

\item Let $\theta\, :\, \mathbb{E}\, \longrightarrow\, \mathbb{E}\otimes K_{X_0}$ be a Higgs
field on $\mathbb{E}$. Let
$$
\widetilde{\theta}\ := \ \theta\otimes_{\overline{\mathbb Q}} \mathbb{C}\ :\ E\, \longrightarrow\, E\otimes K_X
$$
be the base change of $\theta$ to $\mathbb C$. Then the logarithmic Higgs field
$$
f_* \widetilde{\theta}\ :\ f_* E\ \longrightarrow\ (f_* E)\otimes K_X
\otimes {\mathcal O}_{\mathbb{P}^1_{\mathbb C}}(S_1)
$$
(see \eqref{a6b}) is the base change to $\mathbb C$ of the logarithmic Higgs field
$$
(f_0)_* \theta\ :\ (f_0)_* \mathbb{E}\ \longrightarrow\ ((f_0)_*\mathbb{E})\otimes K_{\mathbb{P}^1_{\overline{\mathbb Q}}}
\otimes {\mathcal O}_{\mathbb{P}^1_{\overline{\mathbb Q}}}(S_0).
$$
\end{enumerate}
\end{proposition}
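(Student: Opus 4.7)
My plan is to verify that every ingredient in the constructions of Section 2 is functorial with respect to the faithfully flat base change $\mathrm{Spec}\,\mathbb C \to \mathrm{Spec}\,\overline{\mathbb Q}$, so that all three assertions become instances of the standard compatibility between flat base change and pushforward along a finite morphism. The initial observation is that, since $f_0$ is finite (hence affine) and $\overline{\mathbb Q}\hookrightarrow\mathbb C$ is flat, flat base change yields a canonical identification $(f_0)_*\mathbb{E}\otimes_{\overline{\mathbb Q}}\mathbb C \,=\, f_*E$ of $\mathcal O_{\mathbb P^1_{\mathbb C}}$-modules.

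For part (1), I would recall that the parabolic structure on $(f_0)_*\mathbb E$ at a point of $S_0$, produced by the recipe of \cite{AB} and shown to be defined over $\overline{\mathbb Q}$ in \cite[Proposition 2.1]{BG}, consists of a filtration of the fiber by $\overline{\mathbb Q}$-subspaces together with rational weights. Base changing these filtrations to $\mathbb C$ produces the parabolic structure on $f_*E$ at the corresponding point of $S_1$, with the same (rational) weights, and this structure is by construction defined over $\overline{\mathbb Q}$.

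For part (2), I would trace through the construction of $(f_0)_*D$ given in Section 2.4, applied with $\psi=f_0$ and $S=f_0^{-1}(S_0)$: start from $D : \mathbb E \to \mathbb E\otimes K_{X_0}\otimes\mathcal O_{X_0}(f_0^{-1}(S_0))$, apply $(f_0)_*$, and simplify via the projection formula and the identification \eqref{a4}. Each operation involved -- tensor product of sheaves, pushforward along the finite morphism $f_0$, formation of the canonical bundle, and twisting by a divisor -- commutes with $\otimes_{\overline{\mathbb Q}}\mathbb C$. Consequently the base change of $(f_0)_*D$ coincides with the output of the very same recipe applied instead to $\mathcal D$ and $f$, which is exactly $f_*\mathcal D$. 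The Leibniz identity is preserved because the universal derivation $d$ on $X_0$ base changes to the universal derivation on $X$.

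Part (3) follows by the same argument with $\theta$ in place of $D$; since $\theta$ is $\mathcal O_{X_0}$-linear, the verification is strictly easier, as no Leibniz compatibility is needed, only the compatibility of $\psi_*$, tensor products and the projection formula with base change. I do not anticipate a genuine obstacle here: the content of the proposition is that the constructions of Section 2 are natural with respect to base change, and each constituent operation is a textbook compatibility. The only point requiring mild care is keeping track, throughout parts (2) and (3), of the two different but parallel uses of \eqref{a4} over $\overline{\mathbb Q}$ and over $\mathbb C$, and noting that one is the base change of the other.
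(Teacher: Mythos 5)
Your proposal is correct and follows essentially the same route as the paper: the authors also reduce part (1) to the citations of \cite{AB} and \cite[Proposition 2.1]{BG}, and dispose of parts (2) and (3) by observing that the direct-image constructions \eqref{a6} and \eqref{a6b} commute with the base change $\overline{\mathbb Q}\hookrightarrow\mathbb C$. You merely make explicit the flat base change and projection-formula compatibilities that the paper treats as evident.
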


\begin{proof}
The first statement of the proposition is a re-statement of the earlier observation
that the vector bundle $(f_0)_*\mathbb{E}$ has a parabolic
structure over the three points $\{0,\,1,\, \infty\}$ \cite[Section~4]{AB}, and this
parabolic structure is defined over $\overline{\mathbb Q}$ \cite[Proposition 2.1]{BG}.

The second statement of the proposition follows immediately from the construction of
logarithmic connection in \eqref{a6}, and the third statement of the proposition
follows from the construction of logarithmic Higgs field in \eqref{a6b}.
\end{proof}

We will prove a converse of Proposition \ref{prop1}(2). Let $V$ be a vector
bundle on $X$ equipped with an algebraic connection $\mathcal D$. Consider the parabolic vector
bundle $f_* V$ on $\mathbb{P}^1_{\mathbb C}$ equipped with the logarithmic connection
$f_* {\mathcal D}$ (see \eqref{a6}). Suppose that there is a parabolic vector bundle $W$ on
$\mathbb{P}^1_{\overline{\mathbb Q}}$, and a logarithmic connection $D$ on $W$, such that
the parabolic vector bundle with logarithmic connection $(f_* V,\, f_* {\mathcal D})$ is isomorphic
to the base change, to $\mathbb C$, of the parabolic vector bundle with
logarithmic connection $(W,\, D)$. Then there is a vector bundle $E_0$ on $X_0$, and an
algebraic connection $D_0$ on $E_0$, such that the vector bundle with connection
$(V,\, {\mathcal D})$ on $X$ is isomorphic to the base change, to $\mathbb C$, of the vector
bundle with connection $(E_0,\, D_0)$.

Similarly, the following converse of Proposition \ref{prop1}(3) will be proved.
Let $V$ be a vector
bundle on $X$ equipped with an Higgs field $\theta$. Consider the parabolic vector
bundle $f_* V$ on $\mathbb{P}^1_{\mathbb C}$ equipped with the logarithmic Higgs field
$f_* \theta$ (see \eqref{a6b}). Suppose that there is a parabolic vector bundle $W$ on
$\mathbb{P}^1_{\overline{\mathbb Q}}$, and a logarithmic Higgs field $\theta_W$ on $W$, such that
the parabolic vector bundle with logarithmic Higgs field $(f_* V,\, f_* \theta)$ is isomorphic
to the base change, to $\mathbb C$, of the parabolic vector bundle with
logarithmic Higgs field $(W,\, \theta_W)$. Then there is a vector bundle $E_0$ on $X_0$, and a
Higgs field $\theta_0$ on $E_0$, such that the Higgs bundle
$(V,\, \theta)$ on $X$ is isomorphic to the base change, to $\mathbb C$, of the Higgs
bundle $(E_0,\, \theta_0)$.

These will be done in the next section.

\section{The criterion}

\subsection{Algebraic connections}

Let
\begin{equation}\label{ee}
\mathbb{E}
\end{equation}
be a vector bundle on the curve $X_0$ in \eqref{e1}. As mentioned before, there is a
parabolic structure on the direct image $(f_0)_*\mathbb{E}\, \longrightarrow\, \mathbb{P}^1_{\overline{\mathbb Q}}$,
where $f_0$ is the map in \eqref{e1}. Consider the vector bundle $E\, \longrightarrow\, X$ in \eqref{e3}. The
parabolic vector bundle $f_*E$ on $\mathbb{P}^1_{\mathbb C}$ coincides with the base change, to $\mathbb C$,
of the above parabolic vector bundle $(f_0)_*\mathbb{E}$. Let
\begin{equation}\label{e4b}
D\ :\ E\ \longrightarrow\ E\otimes K_X
\end{equation}
be a connection on $E$. We have the logarithmic connection
\begin{equation}\label{e4}
f_*D \ :\ f_*E\ \longrightarrow\ (f_*E)\otimes K_{\mathbb{P}^1_{\mathbb C}}\otimes
{\mathcal O}_{\mathbb{P}^1_{\mathbb C}}(S_1)
\end{equation}
on $f_*E$ (see \eqref{a6}); as before, $S_1$ is the divisor $0+1+\infty$ on $\mathbb{P}^1_{\mathbb C}$.

\begin{proposition}\label{prop2}
Assume that there is a logarithmic connection
$$
D_0\ :\ (f_0)_*\mathbb{E} \ \longrightarrow\ (f_0)_*\mathbb{E}\otimes K_{\mathbb{P}^1_{\overline{\mathbb Q}}}
\otimes {\mathcal O}_{\mathbb{P}^1_{\overline{\mathbb Q}}}(S_0)
$$
(see \eqref{ee} and \eqref{e1}) whose base change, to $\mathbb C$, is the connection $f_*D$ in \eqref{e4};
as before, $S_0$ is the divisor $0+1+\infty$ on $\mathbb{P}^1_{\overline{\mathbb Q}}$.
Then there is a connection on $\mathbb{E}$ whose base change, to $\mathbb C$, coincides with the
connection $D$ (in \eqref{e4b}) on $E\,=\, \mathbb{E}\otimes_{\overline{\mathbb Q}} \mathbb{C}$ (see \eqref{e3}).
\end{proposition}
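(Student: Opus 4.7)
The plan is to choose a reference connection on $\mathbb E$ over $\overline{\mathbb Q}$, subtract it from $D$ to turn the affine problem of descending a connection into the linear one of descending a Higgs field, and then use the faithfulness of $(f_0)_{*}$ to finish. For the reference, note that $E$ admits the connection $D$, so the Atiyah--Weil criterion recalled in Section~2.1 forces every indecomposable summand of $E$ to have degree zero. If $\mathbb E=\bigoplus V_i$ is the indecomposable decomposition over $\overline{\mathbb Q}$, then each $(V_i)_{\mathbb C}$ is a direct sum of indecomposable summands of $E$ (by the uniqueness part of Atiyah's theorem), hence a direct sum of degree-zero bundles, so $\deg V_i=0$. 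Atiyah--Weil applied on $X_0$ now yields an algebraic connection $D^{*}$ on $\mathbb E$. Writing $D^{*}_{\mathbb C}$ for its base change, set
$$
\Psi\ :=\ D\,-\, D^{*}_{\mathbb C}\ \in\ H^0\bigl(X,\, \text{End}(E)\otimes K_X\bigr),
$$
an $\mathcal O_X$-linear morphism since the Leibniz terms cancel; it suffices to show that $\Psi$ descends to a Higgs field $\Psi_0$ on $\mathbb E$, as then $D=(D^{*}+\Psi_0)\otimes_{\overline{\mathbb Q}}\mathbb C$.

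Next I transfer the descent hypothesis from $D$ to $\Psi$. Proposition~\ref{prop1}(2) gives $f_{*}D^{*}_{\mathbb C}=((f_0)_{*}D^{*})\otimes_{\overline{\mathbb Q}}\mathbb C$, so together with the assumption $f_{*}D=D_0\otimes_{\overline{\mathbb Q}}\mathbb C$ one obtains
$$
f_{*}\Psi\,=\,f_{*}D\,-\,f_{*}D^{*}_{\mathbb C}\,=\,\bigl(D_0\,-\,(f_0)_{*}D^{*}\bigr)\otimes_{\overline{\mathbb Q}}\mathbb C
$$
as logarithmic Higgs fields on $f_{*}E$ with poles on $S_1$. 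Put
$$
A_0\,:=\,H^0\bigl(X_0,\,\text{End}(\mathbb E)\otimes K_{X_0}\bigr),\quad B_0\,:=\,H^0\bigl(\mathbb P^1_{\overline{\mathbb Q}},\,\text{End}((f_0)_{*}\mathbb E)\otimes K_{\mathbb P^1_{\overline{\mathbb Q}}}\otimes\mathcal O(S_0)\bigr),
$$
and $A:=A_0\otimes_{\overline{\mathbb Q}}\mathbb C$, $B:=B_0\otimes_{\overline{\mathbb Q}}\mathbb C$ (flat base change applies to these global sections because the relevant sheaves are locally free on projective $\overline{\mathbb Q}$-schemes). Then $\Psi\in A$ and $f_{*}\Psi\in B_0\subset B$, and I want to conclude $\Psi\in A_0$.

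This last step is formal. Since $f_0$ is finite, hence affine, the direct image $(f_0)_{*}$ is faithful on quasi-coherent sheaves; applied to a nonzero morphism $\mathbb E\to\mathbb E\otimes K_{X_0}$ it produces a nonzero morphism, so the $\overline{\mathbb Q}$-linear map $(f_0)_{*}\colon A_0\to B_0$ is injective. Because every $\overline{\mathbb Q}$-vector space is flat, base change to $\mathbb C/\overline{\mathbb Q}$ preserves injectivity, so the induced map $A/A_0\hookrightarrow B/B_0$ sends the class of $\Psi$ to the class of $f_{*}\Psi$, which is zero; hence $\Psi\in A_0$, and $D^{*}+\Psi_0$ is the desired connection on $\mathbb E$.

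The only non-formal step is producing the reference connection $D^{*}$, which is what turns the affine descent problem into a linear one. Everything afterwards reduces to the faithfulness of the direct image on quasi-coherent sheaves combined with elementary linear algebra over the field $\overline{\mathbb Q}$. The same strategy, with the zero Higgs field replacing $D^{*}$, will prove the Higgs analogue (Theorem~\ref{thm2}) even more directly, as no existence theorem is then required.
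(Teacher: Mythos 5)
Your proof is correct, but it takes a genuinely different route from the paper. The paper's argument is geometric: it passes to the Galois closure $\varphi_0\colon Y_0\to\mathbb P^1_{\overline{\mathbb Q}}$, uses the explicit decomposition $\varphi_0^*(f_0)_*\mathbb E=\bigoplus_{g\in\Delta}g^*\gamma_0^*\mathbb E$ from \cite{BG} to see that $\varphi_0^*D_0$ is nonsingular and preserves the summand $\gamma_0^*\mathbb E$ indexed by $g=e$, and then descends the resulting connection from $Y_0$ to $X_0$ via a direct-image argument. You instead linearize: the Atiyah--Weil criterion (valid over any algebraically closed field of characteristic zero, as recalled in Section~2.1) together with the Krull--Schmidt theorem produces a reference connection $D^*$ on $\mathbb E$ over $\overline{\mathbb Q}$, and the problem becomes descending the Higgs field $\Psi=D-D^*_{\mathbb C}$, which follows from the injectivity of $(f_0)_*$ on $\operatorname{Hom}$-spaces (faithfulness of the direct image under a finite morphism) and flatness of $\mathbb C$ over $\overline{\mathbb Q}$. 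Your approach is shorter, avoids the Galois-closure machinery entirely, isolates the one non-formal input (the existence of \emph{some} $\overline{\mathbb Q}$-rational connection on $\mathbb E$), and specializes immediately to the Higgs case of Theorem~\ref{thm2} with the zero reference field; the paper's approach is constructive, parallels the recovery of $\mathbb E$ from the parabolic bundle $(f_0)_*\mathbb E$ in \cite{BG}, and needs no existence theorem. Two small points to make explicit: you invoke the converse direction of Atiyah--Weil (a direct summand of a bundle with a connection inherits a connection by projecting, hence has degree zero), which Section~2.1 states only in the forward direction though it is classical; and the commutativity of your square, i.e.\ that $f_*$ restricted to $A_0$ (resp.\ applied to $D^*_{\mathbb C}$) agrees with $(f_0)_*$ followed by base change, is exactly Proposition~\ref{prop1}(3) (resp.\ Proposition~\ref{prop1}(2)) and should be cited as such.
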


\begin{proof}
In \cite{BG} it was described how to recover $E$ (respectively, $\mathbb{E}$) from $f_*E$
(respectively, $(f_0)_*\mathbb{E}$). The proposition will be proved by examining this recovery process.

Let
\begin{equation}\label{e5}
\varphi_0\ :\ Y_0\ \longrightarrow\ \mathbb{P}^1_{\overline{\mathbb Q}}
\end{equation}
be the Galois closure of the map $f_0$ in \eqref{e1}. Let
\begin{equation}\label{e6}
\Gamma\,\, :=\,\, \text{Gal}(\varphi_0)\, \,=\,\, \text{Aut}(Y_0/\mathbb{P}^1_{\overline{\mathbb Q}})\,\,
\subset\,\, \text{Aut}(Y_0)
\end{equation}
be the Galois group for the map $\varphi_0$ in \eqref{e5}. So $\Gamma$ acts faithfully on $Y_0$ and
$Y_0/\Gamma\,=\, \mathbb{P}^1_{\overline{\mathbb Q}}$. We have a natural projection
$$
\gamma_0\ :\ Y_0\ \longrightarrow\ X_0
$$
such that $\varphi_0\,=\, f_0\circ\gamma_0$. Let
\begin{equation}\label{e7}
G \ :=\ \text{Gal}(\gamma_0)\ =\ \text{Aut}(Y_0/X_0)\ \subset\
\text{Aut}(Y_0/\mathbb{P}^1_{\overline{\mathbb Q}})
\end{equation}
be the Galois group for the above map $\gamma_0$. So $G$ is a subgroup of $\Gamma$, and $Y_0/G\,=\, X_0$.

The pullback of a parabolic vector bundle has a natural parabolic structure (see \cite[Section~3]{AB}). Consider the
vector bundle $\gamma^*_0\mathbb{E}$ on $Y_0$, where $\gamma_0$ is the map in \eqref{e7}; the direct image
$(\varphi_0)_*\gamma^*_0\mathbb{E}$, where $\varphi_0$ is the map in \eqref{e5}, has a natural parabolic structure.
Consider the pulled back parabolic vector bundle $\varphi^*_0 (\varphi_0)_*\gamma^*_0\mathbb{E}$
of the parabolic vector bundle $(\varphi_0)_*\gamma^*_0\mathbb{E}$. The parabolic structure of
$\varphi^*_0 (\varphi_0)_*\gamma^*_0\mathbb{E}$ is the trivial one, meaning there are no nonzero parabolic weights;
in fact,
\begin{equation}\label{e8}
\varphi^*_0 (\varphi_0)_*\gamma^*_0\mathbb{E}\ =\ \bigoplus_{g\in \Gamma} g^* \gamma^*_0\mathbb{E}
\end{equation}
equipped with the trivial parabolic structure (see \cite[p.~19566, Proposition 4.2(2)]{AB}, \cite[(3.9)]{BG}),
where $\Gamma$ is the Galois group in \eqref{e6}.

The base change, to $\mathbb C$, of $Y_0$ will be denoted by $Y$. Let
\begin{equation}\label{e9}
\varphi\ :\ Y\ \longrightarrow\ \mathbb{P}^1_{\mathbb C} \ \ \text{ and }\ \
\gamma\ :\ Y\ \longrightarrow\ X
\end{equation}
be the base changes, to $\mathbb C$, of the maps $\varphi_0$ and $\gamma_0$ respectively (see \eqref{e5}
and \eqref{e7}). So the base change, to $\mathbb C$, of $\varphi^*_0 (\varphi_0)_*\gamma^*_0\mathbb{E}\,
\longrightarrow\, Y_0$ is $\varphi^*\varphi_*\gamma^* E\, \longrightarrow\, Y$. Also, for any
$g\, \in\, \Gamma\,=\, \text{Gal}(\varphi_0)\,=\, \text{Gal}(\varphi)$, the vector
bundle $g^* \gamma^* E$ is the base change, to $\mathbb C$, of $g^* \gamma^*_0\mathbb{E}$. Note that the
Galois groups of $\varphi_0$ and $\varphi$ are canonically identified.

As in \eqref{e8}, we have
\begin{equation}\label{e10}
\varphi^* \varphi_*\gamma^* E \ =\ \bigoplus_{g\in \Gamma} g^* \gamma^* E
\end{equation}
equipped with the trivial parabolic structure (see \eqref{e9}). The decomposition in \eqref{e10} is the base
change, to $\mathbb C$, of the decomposition in \eqref{e8}. Note the vector bundle $\bigoplus_{g\in \Gamma} g^* \gamma^* E$
in \eqref{e10} is equipped with the connection
\begin{equation}\label{e11}
\bigoplus_{g\in \Gamma} g^* \gamma^* D,
\end{equation}
where $D$ is the connection in \eqref{e4b}. Using the isomorphism in \eqref{e10}, the connection
$\bigoplus_{g\in \Gamma} g^* \gamma^* D$ in \eqref{e11} produces a connection on $\varphi^* \varphi_*\gamma^* E$. Let
\begin{equation}\label{e12}
{\mathbb D}\ :\ \varphi^* \varphi_*\gamma^* E \ \longrightarrow\ (\varphi^* \varphi_*\gamma^* E)\otimes K_Y
\end{equation}
be this connection on $\varphi^* \varphi_*\gamma^* E$ given by $\bigoplus_{g\in \Gamma} g^* \gamma^* D$.

Consider the subgroup $G\, \subset\,\Gamma$ in \eqref{e7}. 
Fix a subset
\begin{equation}\label{ed}
\Delta\ \subset\ \Gamma
\end{equation}
such that
\begin{enumerate}
\item the composition of maps
$$
\Delta\ \hookrightarrow\ \Gamma \ \longrightarrow\ \Gamma/G
$$
is a bijection, and

\item $e\ \in\ \Delta$, where $e\, \subset\, \Gamma$ is the identity element.
\end{enumerate}
Using this subset $\Delta\, \subset\, \Gamma$, the vector bundle
$\varphi^*_0(f_0)_*{\mathbb E}\, \longrightarrow\, Y_0$ (see \eqref{e5} and \eqref{e1})
can be realized as a direct summand of the vector bundle $\varphi^*_0 (\varphi_0)_*\gamma^*_0\mathbb{E}$
in \eqref{e8}. In fact, we have
\begin{equation}\label{e13}
\varphi^*_0(f_0)_*{\mathbb E}\ =\ \bigoplus_{g\in \Delta} g^* \gamma^*_0\mathbb{E}\ \subset\
\bigoplus_{g\in\Gamma} g^* \gamma^*_0\mathbb{E}\ =\ \varphi^*_0 (\varphi_0)_*\gamma^*_0\mathbb{E}
\end{equation}
(see \eqref{e8} and \eqref{ed}); see \cite[(3.14)]{BG} for the
isomorphism in \eqref{e13}. Taking base change of \eqref{e13} to $\mathbb C$,
\begin{equation}\label{e14}
\varphi^* f_* E \ =\ \bigoplus_{g\in \Delta} g^* \gamma^* E\ \subset\ \bigoplus_{g\in \Gamma} g^* \gamma^* E
\ =\ \varphi^* \varphi_*\gamma^* E
\end{equation}
(see \eqref{e9}, \eqref{e2} and \eqref{e10}).

Consider the logarithmic connection
\begin{equation}\label{e15b}
\varphi^*_0 D_0 \ :\ \varphi^*_0(f_0)_*{\mathbb E}\ \longrightarrow\ (\varphi^*_0(f_0)_*{\mathbb E})\otimes K_{Y_0}
\otimes {\mathcal O}_{Y_0}(\varphi^*_0 S_0),
\end{equation}
where $D_0$ is the logarithmic connection on $(f_0)_*\mathbb{E}$ in the statement of the proposition. Using
the isomorphism in \eqref{e13} it produces a logarithmic connection
\begin{equation}\label{e15}
\widetilde{D}_0\ :\ \bigoplus_{g\in \Delta} g^* \gamma^*_0\mathbb{E}\ \longrightarrow\
\left(\bigoplus_{g\in \Delta} g^* \gamma^*_0\mathbb{E}\right)\otimes K_{Y_0}\otimes {\mathcal O}_{Y_0}(\varphi^*_0 S_0)
\end{equation}
on $\bigoplus_{g\in \Delta} g^* \gamma^*_0\mathbb{E}$.

Next consider the connection $\bigoplus_{g\in \Gamma} g^* \gamma^* D$ on $\bigoplus_{g\in \Gamma} g^* \gamma^* E$, where $D$
is the connection on $E$ in \eqref{e4b}. The direct summand
$\bigoplus_{g\in \Delta} g^* \gamma^* E\, \subset\, \bigoplus_{g\in \Gamma} g^* \gamma^* E$ in \eqref{e14} is evidently
preserved by this connection $\bigoplus_{g\in \Gamma} g^* \gamma^* D$. In fact, $\bigoplus_{g\in \Gamma} g^* \gamma^* D$
induces the connection $\bigoplus_{g\in \Delta} g^* \gamma^* D$ on
$\bigoplus_{g\in \Delta} g^* \gamma^* E$. Using the isomorphism in \eqref{e14} this connection
$\bigoplus_{g\in \Delta} g^* \gamma^* D$ on $\bigoplus_{g\in \Delta} g^* \gamma^* E$ produces a connection
\begin{equation}\label{e16}
\widetilde{D}\ :\ \varphi^* f_* E \ \longrightarrow\ (\varphi^* f_* E)\otimes K_Y
\end{equation}
on $\varphi^* f_* E$.

The vector bundle $\varphi^* f_* E\, \longrightarrow\, Y$ is the base change, to $\mathbb C$, of
$\varphi^*_0(f_0)_*{\mathbb E}\, \longrightarrow\, Y_0$. The connection $\widetilde{D}$ on $\varphi^* f_* E$
in \eqref{e16} is the base change, to $\mathbb C$, of the logarithmic connection $\varphi^*_0 D_0$ on
$\varphi^*_0(f_0)_*{\mathbb E}$ in \eqref{e15b}. Indeed, this follows from the given condition in the
proposition that the base change, to $\mathbb C$, of the logarithmic connection $D_0$ is the connection
$f_*D$ in \eqref{e4}. The connection $\widetilde{D}\big\vert_{Y\setminus \varphi^{-1}(\{0,1,\infty\})}$
on $(\varphi^* f_* E)\big\vert_{Y\setminus \varphi^{-1}(\{0,1,\infty\})}$ clearly coincides with the
base change, to $\mathbb C$, of the connection $(\varphi^*_0 D_0)\big\vert_{Y_0\setminus \varphi^{-1}(\{0,1,\infty\})}$
on $(\varphi^*_0(f_0)_*{\mathbb E})\big\vert_{Y_0\setminus \varphi^{-1}(\{0,1,\infty\})}$. Hence
$\widetilde{D}$ and the base change, to $\mathbb C$, of $\varphi^*_0 D_0$ coincide on entire $Y$.

Since $\widetilde{D}$ is the base change, to $\mathbb C$, of the logarithmic connection $\varphi^*_0 D_0$,
and $\widetilde{D}$ is a nonsingular connection, it follows that the logarithmic connection
$\varphi^*_0 D_0$ is actually nonsingular, meaning $\varphi^*_0 D_0$ is an usual connection.
The connection $\widetilde{D}$ on $\varphi^* f_* E\,=\, \bigoplus_{g\in \Delta} g^* \gamma^* E$ (see \eqref{e14})
preserves each direct summand $g^* \gamma^* E$, $g\,\in\, \Delta$, of $\bigoplus_{g\in \Delta} g^* \gamma^* E$. To
see this, recall that $\widetilde{D}$ is given by the connection $\bigoplus_{g\in \Delta} g^* \gamma^* D$ on
$\bigoplus_{g\in \Delta} g^* \gamma^* E$ using the isomorphism in \eqref{e14}; hence it follows from the
construction of $\widetilde{D}$ that $\widetilde{D}$ preserves the direct summand $g^* \gamma^* E$ for every
$g\,\in\, \Delta$. Hence the base change, to $\mathbb C$, of $\varphi^*_0 D_0$ preserves the
direct summand $g^* \gamma^* E$ for every $g\,\in\, \Delta$.

Recall that $e\, \in\, \Delta$. So,
the base change, to $\mathbb C$, of $\varphi^*_0 D_0$ preserves $\gamma^* E$. More
precisely, the connection $\varphi^*_0 D_0$ on $\varphi^*_0(f_0)_*{\mathbb E}\, =\,
\bigoplus_{g\in \Delta} g^* \gamma^*_0\mathbb{E}$ preserves $\gamma^*{\mathbb E}$. In other words,
$\varphi^*_0 D_0$ induces a connection on $\gamma^*{\mathbb E}$. Let
\begin{equation}\label{e17}
\widehat{\varphi^*_0 D_0}\ :\ \gamma^*_0{\mathbb E}\ \longrightarrow\ (\gamma^*_0{\mathbb E})\otimes K_{Y_0}
\end{equation}
be the connection of $\gamma^*_0{\mathbb E}$ given by $\varphi^*_0 D_0$. The base change, to $\mathbb C$,
of the connection $\widehat{\varphi^*_0 D_0}$ in \eqref{e17} evidently coincides with the connection
$\gamma^* D$ on $\gamma^* E$, because the connection $\bigoplus_{g\in \Delta} g^* \gamma^* D$ on
$\bigoplus_{g\in \Delta} g^* \gamma^* E$ produces the connection $\gamma^* D$ on $\gamma^* E$ (by
setting $g\,=\,e$).

Let $V_0$ be a vector bundle on $X_0$ and ${\mathcal D}$ a connection on the base change $V\, \longrightarrow
\, X$ of $V_0$ to $\mathbb C$. If there is a connection $\widetilde{\mathcal D}$ on $\gamma^*_0 V_0\,
\longrightarrow\, Y_0$ such that the base change, to $\mathbb C$, of $\widetilde{\mathcal D}$ coincides with
the connection $\gamma^* {\mathcal D}$ on $\gamma^* V\, \longrightarrow\, Y$, then there is a unique connection
$\underline{\widetilde{\mathcal D}}$ on $V_0\, \longrightarrow\, \mathbb X$ such that the following two statements hold:
\begin{itemize}
\item $\gamma^*_0 \underline{\widetilde{\mathcal D}}\ =\ \widetilde{\mathcal D}$, and

\item the base change, to $\mathbb C$, of $\underline{\widetilde{\mathcal D}}$ coincides with the connection
${\mathcal D}$ on $V$.
\end{itemize}
This actually follows from the fact that
$$
H^0(Y_0,\, \text{End}(\gamma^*_0 V_0)\otimes K_{Y_0})\cap (\gamma^* H^0(X,\, \text{End}(V)\otimes K_X))
\ =\ \gamma^* H^0(X_0,\, \text{End}(V_0)\otimes K_{X_0}).
$$
Another way to see this is as follows: Let $X'_0\, \subset\, X_0$ be the unique maximal open subset
over which $\gamma_0$ is unramified. Let
$$
\gamma'_0\ :\ \gamma^{-1}_0(X'_0) \ \longrightarrow\ X'_0
$$
be the restriction of the map $\gamma_0$. Then by the projection formula,
\begin{equation}\label{dc}
(\gamma'_0)_*\gamma^*_0 V_0\ =\
(V_0\big\vert_{X'_0})\otimes ((\gamma'_0)_* {\mathcal O}_{\gamma^{-1}_0(X'_0)})\ =\
V_0\big\vert_{X'_0} \oplus (V_0\big\vert_{X'_0})\otimes {\mathcal I},
\end{equation}
where ${\mathcal I}\, \subset\, (\gamma'_0)_* {\mathcal O}_{\gamma^{-1}_0(X'_0)}$ is the subbundle
whose sections over any open subset ${\mathbb U}\, \subset\, X'_0$ is the subspace of
$\Gamma(\gamma^{-1}_0({\mathbb U}),\, {\mathcal O}_{\gamma^{-1}_0({\mathbb U})})$ consisting of all
$\lambda$ such that $\sum_{y\in \gamma^{-1}_0(x)} \lambda (y)\,=\,0$ for all $x\, \in\, {\mathbb U}$.
The connection $(\gamma'_0)_* \widetilde{\mathcal D}$ on $(\gamma'_0)_*\gamma^*_0 V_0$ preserves
the decomposition in \eqref{dc}. Let $D''$ be the connection on the direct summand
$V_0\big\vert_{X'_0}$ in \eqref{dc} induced by
$(\gamma'_0)_* \widetilde{\mathcal D}$. Let $X'\, \subset\, X$ be the unique maximal open subset
over which $\gamma$ is unramified. So $X'$ is the base change, to $\mathbb C$, of $X'_0$. The base
change, to $\mathbb C$, of $V_0\big\vert_{X'_0}$ coincides with $V\big\vert_{X'}$, and the
base change, to $\mathbb C$, of the connection $D''$ on $V_0\big\vert_{X'_0}$ coincides with
${\mathcal D}\big\vert_{X'}$. Since $\mathcal D$ is a regular connection, this implies that the
connection $D''$ on $V_0\big\vert_{X'_0}$ extends to a regular connection on $V_0$, and the base
change, to $\mathbb C$, of this connection on $V_0$ is $\mathcal D$.

The above observation implies that the connection $\widehat{\varphi^*_0 D_0}$ on $\gamma^*_0{\mathbb E}$
in \eqref{e17} produces a connection
$\underline{\widehat{\varphi^*_0 D_0}}$ on ${\mathbb E}$ such that the base change, to $\mathbb C$, of
$\underline{\widehat{\varphi^*_0 D_0}}$ coincides with the connection $D$ on $E$.
\end{proof}

Combining Proposition \ref{prop1}(2) and Proposition \ref{prop2} we have the following:

\begin{theorem}\label{thm1}
Let $\mathbb E$ be a vector bundle on $X_0$, and let $D$ be a connection on the base change
$E\, \longrightarrow\, X$ of $\mathbb E$ to $\mathbb C$. Then there is a connection on $\mathbb{E}$ whose base
change, to $\mathbb C$, coincides with $D$ if and only if there is a logarithmic connection
$$
D_0\ :\ (f_0)_*\mathbb{E} \ \longrightarrow\ (f_0)_*\mathbb{E}\otimes K_{\mathbb{P}^1_{\overline{\mathbb Q}}}
\otimes {\mathcal O}_{\mathbb{P}^1_{\overline{\mathbb Q}}}(S_0)
$$
(see \eqref{ee} and \eqref{e1}) whose base change, to $\mathbb C$, is the connection $f_*D$.
\end{theorem}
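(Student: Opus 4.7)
The plan is to split the \emph{iff} statement into the two implications and treat each using the machinery already set up.

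For the forward direction, suppose $\mathbb{E}$ carries a connection $\underline{D}$ with $\underline{D}\otimes_{\overline{\mathbb{Q}}}\mathbb{C}=D$. Then the direct image construction of Section~2.4 produces the logarithmic connection $(f_0)_*\underline{D}$ on $(f_0)_*\mathbb{E}$, and the compatibility of direct image with flat base change (applied to the unramified map $f_0$ over the complement of $\{0,1,\infty\}$ and extended uniquely) shows that $(f_0)_*\underline{D}$ base-changes to $f_*D$. This is precisely the content of Proposition~\ref{prop1}(2).

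The reverse direction is the substantive one. The plan here is to use a Belyi-type Galois closure $\varphi_0\colon Y_0\to\mathbb{P}^1_{\overline{\mathbb{Q}}}$ of $f_0$, factoring as $\varphi_0=f_0\circ\gamma_0$ with $\gamma_0\colon Y_0\to X_0$ also Galois (groups $\Gamma\supset G$). Using the decomposition
$$
\varphi_0^*(\varphi_0)_*\gamma_0^*\mathbb{E}\;=\;\bigoplus_{g\in\Gamma}g^*\gamma_0^*\mathbb{E}
$$
(with trivial parabolic structure) recalled from \cite{AB,BG}, and fixing a set $\Delta\subset\Gamma$ of coset representatives for $\Gamma/G$ with $e\in\Delta$, identify the direct summand
$$
\varphi_0^*(f_0)_*\mathbb{E}\;=\;\bigoplus_{g\in\Delta}g^*\gamma_0^*\mathbb{E}.
$$
Now pull back the hypothetical log connection $D_0$ along $\varphi_0$ to obtain a logarithmic connection $\varphi_0^*D_0$ on $\varphi_0^*(f_0)_*\mathbb{E}$. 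Its base change to $\mathbb{C}$ equals $\varphi^*(f_*D)$, which under the analogous decomposition over $\mathbb{C}$ is the honest (nonsingular) connection $\bigoplus_{g\in\Delta}g^*\gamma^*D$ that preserves each summand. Identifying the two connections off the ramification divisor and extending uniquely shows that $\varphi_0^*D_0$ is already nonsingular over $\overline{\mathbb{Q}}$ and preserves each summand $g^*\gamma_0^*\mathbb{E}$; the $g=e$ summand then yields a connection on $\gamma_0^*\mathbb{E}$ whose base change is $\gamma^*D$.

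The final step is Galois descent: descend the connection from $\gamma_0^*\mathbb{E}$ on $Y_0$ down to a connection on $\mathbb{E}$ on $X_0$. The cleanest route is to restrict to the maximal open $X_0'\subset X_0$ over which $\gamma_0$ is unramified and use the projection formula splitting
$$
(\gamma_0')_*\gamma_0^*\mathbb{E}\;=\;\mathbb{E}\big|_{X_0'}\;\oplus\;(\mathbb{E}\big|_{X_0'}\otimes\mathcal{I}),
$$
which is preserved by the pushforward connection; the induced piece on the first summand extends across the finitely many ramification points precisely because its base change $D$ is everywhere regular. The main obstacle is therefore the verification that $\varphi_0^*D_0$ is genuinely regular and respects the $\bigoplus_{g\in\Delta}$ decomposition — both of which are reduced, via base change to $\mathbb{C}$ and unique extension from the unramified locus, to the evident fact that $\bigoplus_{g\in\Delta}g^*\gamma^*D$ is regular and diagonal. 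Combining Proposition~\ref{prop1}(2) with this Proposition~\ref{prop2} argument gives the theorem.
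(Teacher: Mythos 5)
Your proposal is correct and follows essentially the same route as the paper: the forward direction is Proposition~\ref{prop1}(2), and the reverse direction reproduces the paper's proof of Proposition~\ref{prop2} — Galois closure, the decomposition $\varphi_0^*(\varphi_0)_*\gamma_0^*\mathbb{E}=\bigoplus_{g\in\Gamma}g^*\gamma_0^*\mathbb{E}$ with the $\Delta$-indexed summand identified with $\varphi_0^*(f_0)_*\mathbb{E}$, regularity and diagonality of $\varphi_0^*D_0$ deduced by comparison with $\bigoplus_{g\in\Delta}g^*\gamma^*D$ off the ramification locus, extraction of the $g=e$ summand, and descent along $\gamma_0$ via the projection-formula splitting with extension over the ramification points. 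No gaps.
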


\subsection{Higgs bundles}

\begin{theorem}\label{thm2}
Let $\mathbb E$ be a vector bundle on $X_0$, and let $\theta$ be a Higgs field on the base change
$E\, \longrightarrow\, X$ of $\mathbb E$ to $\mathbb C$. Then there is a Higgs field on $\mathbb{E}$ whose base
change, to $\mathbb C$, coincides with $\theta$ if and only if there is a logarithmic Higgs field
$$
\theta_0\ :\ (f_0)_*\mathbb{E} \ \longrightarrow\ (f_0)_*\mathbb{E}\otimes K_{\mathbb{P}^1_{\overline{\mathbb Q}}}
\otimes {\mathcal O}_{\mathbb{P}^1_{\overline{\mathbb Q}}}(S_0)
$$
whose base change, to $\mathbb C$, is the logarithmic Higgs field $f_*\theta$.
\end{theorem}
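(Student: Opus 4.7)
The plan is to treat the two implications separately. The forward direction is immediate from Proposition \ref{prop1}(3): if $\mathbb{E}$ admits a Higgs field $\theta'$ whose base change to $\mathbb C$ is $\theta$, then the logarithmic Higgs field $\theta_0 \,:=\, (f_0)_*\theta'$ on $(f_0)_*\mathbb{E}$ has base change $f_*\theta$. The content is the converse, and the strategy is to mirror the proof of Proposition \ref{prop2} line by line, replacing every occurrence of ``connection'' by ``logarithmic Higgs field''. Because Higgs fields are $\mathcal O$-linear, the Leibniz identity plays no role and the argument becomes somewhat simpler.

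For the converse I would first reintroduce the Galois closure $\varphi_0\,:\, Y_0 \,\longrightarrow\, \mathbb{P}^1_{\overline{\mathbb Q}}$ of $f_0$, the intermediate map $\gamma_0\,:\, Y_0 \,\longrightarrow\, X_0$, the Galois groups $G \,\subset\, \Gamma$, and a transversal $\Delta \,\subset\, \Gamma$ to $G$ with $e \,\in\, \Delta$, as in \eqref{e5}--\eqref{ed}. Pulling back $\theta_0$ along $\varphi_0$ and using the decomposition \eqref{e13}, I would view $\varphi^*_0 \theta_0$ as a logarithmic Higgs field on $\bigoplus_{g\in \Delta} g^*\gamma^*_0\mathbb{E}$ with poles on $\varphi^*_0 S_0$. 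Meanwhile, the Higgs field $\bigoplus_{g \in \Delta} g^*\gamma^*\theta$ on $\varphi^* f_* E\,=\, \bigoplus_{g \in \Delta} g^*\gamma^* E$ is regular (i.e., non-logarithmic) and manifestly preserves each direct summand. Since by hypothesis the base change of $\theta_0$ to $\mathbb C$ is $f_*\theta$, the base change of $\varphi^*_0 \theta_0$ coincides with $\bigoplus_{g \in \Delta} g^*\gamma^*\theta$. This forces $\varphi^*_0 \theta_0$ itself to be regular and to preserve each direct summand $g^*\gamma^*_0\mathbb{E}$; the $g\,=\, e$ component yields a Higgs field $\widehat{\varphi^*_0 \theta_0}$ on $\gamma^*_0\mathbb{E}$ whose base change to $\mathbb C$ is $\gamma^*\theta$.

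The last step is to descend from $\gamma^*_0\mathbb{E}$ to $\mathbb{E}$, and for this I would invoke the same projection-formula argument as in the final paragraph of the proof of Proposition \ref{prop2}. Over the maximal open subset $X'_0 \,\subset\, X_0$ on which $\gamma_0$ is unramified, the splitting \eqref{dc} is preserved by $(\gamma'_0)_*\widehat{\varphi^*_0 \theta_0}$, which therefore induces a Higgs field on $\mathbb{E}\big\vert_{X'_0}$ whose base change to $\mathbb C$ is $\theta\big\vert_{X'}$. Regularity of $\theta$ then forces this induced Higgs field to extend regularly across $X_0 \setminus X'_0$ to give a Higgs field on $\mathbb{E}$ with base change $\theta$. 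I expect the only step genuinely requiring attention to be the verification that the pushforward Higgs field preserves the splitting \eqref{dc}, since this was invoked without detailed verification even in the connection case; but for Higgs fields this reduces to a purely $\mathcal O$-linear identity involving the trace decomposition of $(\gamma'_0)_*\mathcal O_{\gamma^{-1}_0(X'_0)}$, and should be routine. No other substantial obstacle is anticipated beyond faithfully transcribing the proof of Proposition \ref{prop2} with these straightforward modifications.
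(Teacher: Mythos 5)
Your proposal is correct and follows exactly the route the paper intends: the paper's own proof of Theorem \ref{thm2} consists of the single remark that one repeats the proof of Theorem \ref{thm1} (i.e., Proposition \ref{prop1}(3) for the forward direction and the argument of Proposition \ref{prop2} for the converse) with straightforward modifications, and your write-up is a faithful and accurate transcription of that argument to the Higgs setting, including the correct observation that $\mathcal O$-linearity makes the descent step easier. No discrepancy with the paper's approach.
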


\begin{proof}
The proof follows exactly along the line of proof of Theorem \ref{thm1}. The modifications needed are very
straightforward. We omit the details.
\end{proof}

\section*{Acknowledgements}

The second author would like to thank Max Plank Institute, Bonn for a visiting position in May-June 2023, 
where he learnt about this question from Motohico Mulase. The authors thank him for posing the question. The 
second author would like to acknowledge the support of the SERB MATRICS grant MTR/2023/001233.
The first author is partially supported by a J. C. Bose Fellowship (JBR/2023/000003).

%%%%%%%%%%%%%%%%%%%%%%%%%%%%%%%%%%%%%%%%%%%%%%%%%%%%%%%%%%%%%%%%%%%%%%%%%%%%%%%%%%%%%%%%%%%%%%%%%%%%%

\end{document}